\documentclass[oneside,reqno,english]{amsart}
\usepackage[T1]{fontenc}
\usepackage[latin9]{inputenc}
\usepackage{babel}
\usepackage{prettyref}
\usepackage{mathrsfs}
\usepackage{enumitem}
\usepackage{amstext}
\usepackage{amsthm}
\usepackage{amssymb}
\usepackage[all]{xy}
\usepackage[pdfusetitle,
 bookmarks=true,bookmarksnumbered=false,bookmarksopen=false,
 breaklinks=false,pdfborder={0 0 1},backref=false,colorlinks=false]
 {hyperref}
\hypersetup{
 colorlinks=true,citecolor=blue,linkcolor=blue,linktocpage=true}

\makeatletter
\numberwithin{equation}{section}
\numberwithin{figure}{section}
\theoremstyle{plain}
\newtheorem{thm}{\protect\theoremname}[section]
\theoremstyle{plain}
\newtheorem{cor}[thm]{\protect\corollaryname}
\theoremstyle{remark}
\newtheorem{rem}[thm]{\protect\remarkname}
\theoremstyle{definition}
\newtheorem{example}[thm]{\protect\examplename}
\theoremstyle{definition}
\newtheorem{defn}[thm]{\protect\definitionname}
\theoremstyle{plain}
\newtheorem{lem}[thm]{\protect\lemmaname}

\newrefformat{cor}{Corollary~\ref{#1}}
\newrefformat{subsec}{Section~\ref{#1}}
\newrefformat{lem}{Lemma~\ref{#1}}
\newrefformat{thm}{Theorem~\ref{#1}}
\newrefformat{sec}{Section~\ref{#1}}
\newrefformat{chap}{Chapter~\ref{#1}}
\newrefformat{prop}{Proposition~\ref{#1}}
\newrefformat{exa}{Example~\ref{#1}}
\newrefformat{tab}{Table~\ref{#1}}
\newrefformat{rem}{Remark~\ref{#1}}
\newrefformat{def}{Definition~\ref{#1}}
\newrefformat{fig}{Figure~\ref{#1}}
\newrefformat{claim}{Claim~\ref{#1}}

\AtBeginDocument{
  
}

\makeatother

\providecommand{\corollaryname}{Corollary}
\providecommand{\definitionname}{Definition}
\providecommand{\examplename}{Example}
\providecommand{\lemmaname}{Lemma}
\providecommand{\remarkname}{Remark}
\providecommand{\theoremname}{Theorem}

\begin{document}
\title[]{Hilbert space valued Gaussian processes, their kernels, factorizations,
and covariance structure}
\author{Palle E.T. Jorgensen}
\address{(Palle E.T. Jorgensen) Department of Mathematics, The University of
Iowa, Iowa City, IA 52242-1419, U.S.A.}
\email{palle-jorgensen@uiowa.edu}
\author{James Tian}
\address{(James F. Tian) Mathematical Reviews, 416 4th Street Ann Arbor, MI
48103-4816, U.S.A.}
\email{jft@ams.org}
\begin{abstract}
Motivated by applications, we introduce a general and new framework
for operator valued positive definite kernels. We further give applications
both to operator theory and to stochastic processes. The first one
yields several dilation constructions in operator theory, and the
second to general classes of stochastic processes. For the latter,
we apply our operator valued kernel-results in order to build new
Hilbert space-valued Gaussian processes, and to analyze their structures
of covariance configurations.
\end{abstract}

\subjclass[2000]{Primary: 46E22. Secondary: 47A20, 47B32, 60G15.}
\keywords{Positive definite functions, kernels, Gaussian processes, covariance,
dilation, POVMs. }
\maketitle

\section{Introduction}

Our present paper deals with two closely related issues, (i) a number
of questions from operator theory, factorizations, and dilations;
and from (ii) non-commutative stochastic analysis. The first one addresses
questions around classes of Hilbert space dilations; and the second
(ii) deals with a general framework for Hilbert space valued Gaussian
processes. While, on the face of it, it is not obvious that the two
questions are closely related, but as we show, (i) and (ii) together
suggest the need for an extension of both the traditional framework
of positive definite (scalar valued) kernels and their associated
reproducing kernel Hilbert spaces (RKHSs); and of the framework for
non-commutative stochastic analysis. Hence our present starting point
will be a fixed separable Hilbert space $H$, and we then study the
notion of positive definite kernels $K$ which take values in the
algebra of all bounded operators on $H$, here denoted $B(H)$. 

One of the advantages of our new perspective is reflected in the following
important fact: a variety of Hilbert completions in our approach will
now take an explicit form, as opposed the guise of abstract equivalence
classes. In particular, every Hilbert space arising via a GNS construction
will now instead be a concrete RKHS. Hence, explicit formulas, and
no more abstract and inaccessible equivalence classes.

For the theory or reproducing kernels, the literature is extensive,
both pure and recent applications, we refer to the following, including
current, \cite{MR80878,MR31663,MR4302453,MR3526117,MR1305949,MR4690276,MR3700848}.
Regarding more recent applications of RKHSs, we refer the reader to
e.g., \cite{MR4575370}.

\textbf{Notations.} Throughout the paper, we continue with the physics
convention: inner products linear in the second variable. 

For a p.d. kernel $K:S\times S\rightarrow\mathbb{C}$, we denote by
$H_{K}$ the corresponding reproducing kernel Hilbert space. It is
the Hilbert completion of 
\[
span\left\{ K_{y}\left(\cdot\right):=K\left(\cdot,y\right)\mid y\in S\right\} 
\]
with respect to the inner product 
\[
\left\langle \sum_{i}c_{i}K\left(\cdot,x_{i}\right),\sum_{i}d_{j}K\left(\cdot,x_{j}\right)\right\rangle _{H_{K}}:=\sum_{i}\sum_{j}\overline{c}_{i}d_{j}K\left(x_{i},x_{j}\right).
\]
The reproducing property is as follows: 
\begin{equation}
\varphi\left(x\right)=\left\langle K\left(\cdot,x\right),\varphi\right\rangle _{H_{K}},\forall x\in S,\:\forall\varphi\in H_{K}.\label{eq:A3}
\end{equation}
For any orthonormal basis (ONB) $\left(\varphi_{i}\right)$ in $H_{K}$,
one has 
\begin{equation}
K\left(x,y\right)=\sum_{i}\varphi_{i}\left(x\right)\overline{\varphi_{i}\left(y\right)},\quad\forall x,y\in S.\label{eq:A4}
\end{equation}

A $B\left(H\right)$-valued kernel $K:S\times S\rightarrow B\left(H\right)$
is positive definite (p.d.) if 
\begin{equation}
\sum_{i=1}^{n}\left\langle a_{i},K\left(s_{i},s_{j}\right)a_{j}\right\rangle _{H}\geq0\label{eq:A5}
\end{equation}
for all $\left(a_{i}\right)_{1}^{n}$ in $H$, and all $n\in\mathbb{N}$.

\section{\protect\label{sec:2}Operator valued p.d. kernels}

While operator theory, and analysis more generally, is rich with theories
and applications where positive definite kernels (p.d.) of one kind
or the other (e.g., \cite{MR1200633,MR3526117,MR4280112}) play a
crucial role , the purpose of the present section is to point out
a useful duality framework for the case of operator valued p.d. kernels,
here referred to as $B(H)$-valued kernels, on the one hand, and a
scalar valued p.d. kernel on the other. As noted in the cited references,
a recurrent theme for this framework is the construction of dilation
Hilbert spaces, and associated systems of isometries;---the idea
being that efficient choices of dilations serve to encapsule \textquotedblleft better\textquotedblright{}
solutions, where \textquotedblleft better\textquotedblright{} typically
refers to spectral theoretic properties. The duality discussed here
(see especially \prettyref{thm:B1} and \prettyref{cor:B2}) implies
an efficient way of identifying such dilations. Starting with a $B(H)$-valued
p.d. kernel $K$ for a general set $S$ (or rather $S\times S$),
we identify a canonical scalar valued p.d. kernel $\tilde{K}$ but
now for the set $S\times H$. The gist of this duality for $K$ and
$\tilde{K}$ (see \eqref{eq:B1}) is that both the dilations and the
corresponding system of intertwining operators (see \eqref{eq:B2})
take a natural form, and they allow for explicit formulas, as opposed
to just abstract existence. In \prettyref{thm:B1} and Corollaries
\ref{cor:B2}, \ref{cor:B3}, and \ref{cor:B4}, we flesh out the
details regarding this point, see \eqref{eq:B9} and \eqref{eq:B10}.
And in subsection \ref{subsec:b1} we further offer examples. \prettyref{sec:3}
below in turn deals with a related framework but now in the context
of Schwartz distributions. We stress that our present general framework
is motivated by questions for Gaussian processes in stochastic analysis,
and in the corresponding quantum counterparts; see e.g., \cite{MR3402823,MR489494,MR4509531}
and \cite{MR4280112}. As noted in the literature, there is a natural
correspondence between p.d. kernels on the one hand and Gaussian processes
on the other. In particular, every p.d. kernel is the covariance kernel
of a Gaussian process.

Below we introduce the general framework for operator valued positive
definite kernels. Our analysis is applied to several dilation constructions,
both in operator theory, and in stochastic processes. The latter will
be taken up in \prettyref{sec:4} again where we apply the results
here to build Hilbert space valued Gaussian processes, and examine
their properties. 

Let $S$ be a set, and let $K:S\times S\rightarrow B\left(H\right)$
be positive definite. Set $X=S\times H$, and define $\tilde{K}:X\times X\rightarrow\mathbb{C}$
by 
\begin{equation}
\tilde{K}\left(\left(s,a\right),\left(t,b\right)\right)=\left\langle a,K\left(s,t\right)b\right\rangle _{H},\label{eq:B1}
\end{equation}
for all $a,b\in H$, and all $s,t\in S$. Then $\tilde{K}$ is a scalar-valued
p.d. kernel on $X\times X$. Let $H_{\tilde{K}}$ be the corresponding
RKHS. 

It will follow from our subsequent results that this $H_{\tilde{K}}$
realization has properties that makes it universal. Moreover, one
notes that in different but related p.d. settings, analogous $H_{\tilde{K}}$-constructions
serve as a generators of dilation Hilbert spaces, and representations.
It has counterparts for such related constructions as GNS (Gelfand-Naimark-Segal),
and Stinespring (complete positivity), see e.g., \cite{MR4114386,MR4581177,MR4482713}.

The authors have chosen to include a proof sketch for \prettyref{thm:B1}
as well as the corresponding displayed equations. Three reasons: (i)
this will allow us an opportunity to define and introduce the basic
notions, and terminology, which will be used throughout the paper;
(ii) for the benefit of non-specialists, it will make our paper more
self-contained. And, (iii) it will allow us to link the (more familiar)
classical theory for kernels with the new directions which form the
framework for the results presented later, Gaussian processes, and
the quantum framework.
\begin{thm}
\label{thm:B1}The family of operators $\left\{ V_{s}\right\} _{s\in S}$,
where $V_{s}:H\rightarrow H_{\tilde{K}}$, defined as 
\begin{equation}
V_{s}a=\tilde{K}\left(\cdot,\left(s,a\right)\right):X\rightarrow\mathbb{C},\quad a\in H,\label{eq:B2}
\end{equation}
satisfies the following:
\begin{enumerate}
\item For all $s\in S$, and all $a\in H$, 
\[
\left\Vert V_{s}a\right\Vert _{H_{\tilde{K}}}^{2}=\left\langle a,K\left(s,s\right)a\right\rangle _{H}.
\]
\item The adjoint $V_{s}^{*}:H_{\tilde{K}}\rightarrow H$ is determined
by
\[
V_{s}^{*}\tilde{K}\left(\cdot,\left(t,b\right)\right)=K\left(s,t\right)b.
\]
\item $V_{s}V_{s}^{*}:H_{\tilde{K}}\rightarrow H_{\tilde{K}}$ is given
by 
\[
V_{s}V_{s}^{*}\tilde{K}\left(\cdot,\left(t,b\right)\right)=\tilde{K}\left(\cdot,\left(s,K\left(s,t\right)b\right)\right).
\]
\item For all $s_{1},s_{2},\cdots,s_{n}$ in $S$, 
\begin{eqnarray*}
 &  & \left(V_{s_{1}}V_{s_{1}}^{*}\right)\cdots\left(V_{s_{n}}V_{s_{n}}^{*}\right)\tilde{K}\left(\cdot,\left(t,b\right)\right)\\
 & = & \tilde{K}\left(\cdot,\left(s_{1},K\left(s_{1},s_{2}\right)\cdots K\left(s_{n-1},s_{n}\right)K\left(s_{n},t\right)b\right)\right).
\end{eqnarray*}
\item For all $s,s'\in S$, and $b\in H$, 
\[
V_{s'}V_{s}^{*}\tilde{K}\left(\cdot,\left(t,b\right)\right)=\tilde{K}\left(\cdot,\left(s',K\left(s,t\right)b\right)\right).
\]
\item $V_{s}^{*}V_{t}b=K\left(s,t\right)b$, and 
\begin{align*}
\left(V_{s_{1}}^{*}V_{t_{1}}\right)\cdots\left(V_{s_{n}}^{*}V_{t_{n}}\right)b & =K\left(s_{1},t_{1}\right)\cdots K\left(s_{n-1},t_{n-1}\right)K\left(s_{n},t_{n}\right)b.
\end{align*}
\end{enumerate}
If, in addition, 
\[
K\left(s,s\right)=I,\quad\forall s\in S,
\]
then the operators $V_{s}:H\rightarrow H_{\tilde{K}}$ are isometric,
and $V_{s}V_{s}^{*}$ are (selfadjoint) projections in $H_{\tilde{K}}$. 
\end{thm}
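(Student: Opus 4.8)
The plan is to let everything cascade from the reproducing property of $H_{\tilde{K}}$, namely $\langle \tilde{K}(\cdot, x), \tilde{K}(\cdot, y)\rangle_{H_{\tilde{K}}} = \tilde{K}(x,y)$ for $x,y \in X$, together with one structural fact about $K$ that I would establish at the outset: the Hermitian symmetry $K(s,t)^* = K(t,s)$. The latter is forced by \eqref{eq:A5}, which says precisely that for every finite list $s_1,\dots,s_n$ the block operator $[K(s_i,s_j)]_{i,j}$ on $H^{\oplus n}$ is positive, hence self-adjoint over $\mathbb{C}$; and self-adjointness of that block matrix is exactly the entrywise relation $K(s_i,s_j)^* = K(s_j,s_i)$. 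I regard this as the one genuinely non-routine input, and it is the main obstacle: the remaining six items are bookkeeping once it is in hand.

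First, for part (1), since $V_s a = \tilde{K}(\cdot,(s,a))$ the reproducing property at the diagonal gives $\|V_s a\|_{H_{\tilde{K}}}^2 = \tilde{K}((s,a),(s,a)) = \langle a, K(s,s) a\rangle_H$. Because $K(s,s)\in B(H)$ this simultaneously shows $\|V_s\| \leq \|K(s,s)\|^{1/2}$, so each $V_s$ is bounded and its adjoint exists; this small point is what legitimizes all later adjoint manipulations on the (non-closed) span of kernel sections. I would also note that $a \mapsto V_s a$ is linear, using linearity of $\langle b, K(t,s)\,\cdot\,\rangle_H$ in its second slot.

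Next, for part (2) I would compute $V_s^*$ on the dense family of kernel sections: for any $(t,b)$ and $a\in H$, the adjoint relation and the reproducing property give $\langle V_s^*\tilde{K}(\cdot,(t,b)), a\rangle_H = \langle \tilde{K}(\cdot,(t,b)), V_s a\rangle_{H_{\tilde{K}}} = \tilde{K}((t,b),(s,a)) = \langle b, K(t,s) a\rangle_H$. Invoking $K(t,s) = K(s,t)^*$, the right side equals $\langle K(s,t) b, a\rangle_H$, whence $V_s^*\tilde{K}(\cdot,(t,b)) = K(s,t) b$, extended off the dense set by boundedness. Parts (3) and (5) are then immediate by composing this with the definition of $V_{s'}$, namely $V_{s'}V_s^*\tilde{K}(\cdot,(t,b)) = V_{s'}(K(s,t) b) = \tilde{K}(\cdot,(s', K(s,t) b))$, with (3) the case $s'=s$. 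Part (4) follows by induction on $n$: applied from the right, each factor $V_{s_k}V_{s_k}^*$ sends $\tilde{K}(\cdot,(s_{k+1},c))$ to $\tilde{K}(\cdot,(s_k, K(s_k,s_{k+1})c))$ (with $s_{n+1}:=t$), producing the stated telescoping product. For part (6), $V_s^*V_t b = V_s^*\tilde{K}(\cdot,(t,b)) = K(s,t) b$ shows $V_s^*V_t = K(s,t)$ as operators on $H$, and the product formula is just the resulting operator product read off by iteration.

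Finally, under the hypothesis $K(s,s) = I$, the isometry claim is read directly from part (1), since $\|V_s a\|^2 = \langle a, a\rangle = \|a\|^2$ (equivalently $V_s^*V_s = K(s,s) = I$ by part (6)). Then $V_s V_s^*$ is self-adjoint by construction and idempotent, because $(V_s V_s^*)^2 = V_s(V_s^* V_s)V_s^* = V_s V_s^*$, so it is the orthogonal projection onto $\mathrm{ran}\,V_s$. To summarize the difficulty: after the Hermitian symmetry is secured at the start, every item is a one-line consequence of the reproducing property, the only genuine care being to confirm boundedness of $V_s$ before manipulating its adjoint.
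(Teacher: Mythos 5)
Your proof is correct, and its skeleton is the same as the paper's: everything is derived from the reproducing property of $H_{\tilde{K}}$, with (3) and (5) obtained by composing (2) with the definition of $V_{s'}$, and (4), (6) by induction. The one genuine difference is how you obtain the adjoint formula (2). You compute $\left\langle V_{s}^{*}\tilde{K}\left(\cdot,\left(t,b\right)\right),a\right\rangle _{H}=\tilde{K}\left(\left(t,b\right),\left(s,a\right)\right)=\left\langle b,K\left(t,s\right)a\right\rangle _{H}$ and must then invoke the Hermitian symmetry $K\left(t,s\right)=K\left(s,t\right)^{*}$, which you correctly extract from positive definiteness (positivity of the block operator $\left[K\left(s_{i},s_{j}\right)\right]$ on $H^{\oplus n}$ forces its self-adjointness over $\mathbb{C}$). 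The paper instead pairs on the other side: $\left\langle a,V_{s}^{*}\tilde{K}\left(\cdot,\left(t,b\right)\right)\right\rangle _{H}=\left\langle V_{s}a,\tilde{K}\left(\cdot,\left(t,b\right)\right)\right\rangle _{H_{\tilde{K}}}=\tilde{K}\left(\left(s,a\right),\left(t,b\right)\right)=\left\langle a,K\left(s,t\right)b\right\rangle _{H}$, which yields (2) with no symmetry lemma at all (the symmetry is already absorbed into the fact that the p.d. kernel $\tilde{K}$ is Hermitian). So your characterization of the symmetry step as ``the main obstacle'' is somewhat misplaced: it is avoidable by simply orienting the adjoint computation the other way. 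On the other hand, your write-up is more complete than the paper's in two minor respects: you justify boundedness of $V_{s}$ (via $\left\Vert V_{s}\right\Vert \leq\left\Vert K\left(s,s\right)\right\Vert ^{1/2}$) before manipulating adjoints, and you actually prove the final projection claim $\left(V_{s}V_{s}^{*}\right)^{2}=V_{s}V_{s}^{*}$ under $K\left(s,s\right)=I$, which the paper asserts without argument.
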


\begin{proof}
(1) By definition, 
\[
\left\Vert V_{s}a\right\Vert _{H_{\tilde{K}}}^{2}=\left\langle \tilde{K}\left(\cdot,\left(s,a\right)\right),\tilde{K}\left(\cdot,\left(s,a\right)\right)\right\rangle _{H_{\tilde{K}}}=\left\langle a,K\left(s,t\right)b\right\rangle _{H}.
\]

(2) For all $a,b\in H$, and all $s,t\in S$, since 
\[
\left\langle V_{s}a,\tilde{K}\left(\cdot,\left(t,b\right)\right)\right\rangle _{H_{\tilde{K}}}=\left\langle \tilde{K}\left(\cdot,\left(s,a\right)\right),\tilde{K}\left(\cdot,\left(t,b\right)\right)\right\rangle _{H_{\tilde{K}}}=\left\langle a,K\left(s,t\right)b\right\rangle _{H}
\]
it follows that $V_{s}^{*}\tilde{K}\left(\cdot,\left(t,b\right)\right)=K\left(s,t\right)b$,
as stated.

(3) A direct calculation shows that 
\[
V_{s}V_{s}^{*}\tilde{K}\left(\cdot,\left(t,b\right)\right)=V_{s}K\left(s,t\right)b=\tilde{K}\left(\cdot,\left(s,K\left(s,t\right)b\right)\right).
\]

(4) From the above, 
\begin{align*}
\left(V_{s_{n}}V_{s_{n}}^{*}\right)\tilde{K}\left(\cdot,\left(t,b\right)\right) & =V_{s_{n}}K\left(s_{n},t\right)b=\tilde{K}\left(\cdot,\left(s_{n},K\left(s_{n},t\right)b\right)\right)\\
\left(V_{s_{n-1}}V_{s_{n-1}}^{*}\right)\left(V_{s_{n}}V_{s_{n}}^{*}\right)\tilde{K}\left(\cdot,\left(t,b\right)\right) & =V_{s_{n-1}}K\left(s_{n-1},s_{n}\right)K\left(s_{n},t\right)b\\
 & =\tilde{K}\left(\cdot,\left(s_{n-1},K\left(s_{n-1},s_{n}\right)K\left(s_{n},t\right)b\right)\right)
\end{align*}
and the assertion follows by induction. 

(5) This is immediate: 
\[
V_{s'}V_{s}^{*}\tilde{K}\left(\cdot,\left(t,b\right)\right)=V_{s'}\left(K\left(s,t\right)b\right)=\tilde{K}\left(\cdot,\left(s',K\left(s,t\right)b\right)\right)
\]

(6) Use part (5) and induction. 
\end{proof}
Notably, the above theorem leads to a characterization/factorization
for operator-valued p.d. kernels: 
\begin{cor}
\label{cor:B2}Let $K:S\times S\rightarrow B\left(H\right)$. Then
the kernel $K$ is p.d. if and only if there exists a family of operators
$\left\{ V_{s}\right\} _{s\in S}$ from $H$ to another Hilbert space
$L$, such that $K$ factors as 
\begin{equation}
K\left(s,t\right)=V_{s}^{*}V_{t},\quad s,t\in S.\label{eq:B9}
\end{equation}
If $K\left(s,s\right)=I$ for all $s\in S$, then $V_{s}:H\rightarrow L$
are isometric. 

Furthermore, if $L$ is minimal in the sense that
\[
L=\overline{span}\left\{ V_{s}a:s\in S,a\in H\right\} ,
\]
then $L\simeq H_{\tilde{K}}.$
\end{cor}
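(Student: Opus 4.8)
The plan is to establish the biconditional in its two natural directions, then dispose of the isometry clause as a one-line consequence of \prettyref{thm:B1}, and finally treat the minimality assertion, which is the only part that carries real content. Throughout, the guiding principle is that every inner product in $L$ and every inner product in $H_{\tilde{K}}$ is governed by the same Gram matrix $\bigl(\langle a_i,K(s_i,s_j)a_j\rangle_H\bigr)$, so the two Hilbert spaces are forced to coincide as soon as both are minimal.

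For the ``if'' direction I would assume the factorization $K(s,t)=V_s^*V_t$ and verify \eqref{eq:A5} directly. Given $(a_i)_1^n$ in $H$ and $(s_i)_1^n$ in $S$, the defining sum rewrites, using the factorization and the adjoint relation, as $\sum_{i,j}\langle V_{s_i}a_i,V_{s_j}a_j\rangle_L=\bigl\|\sum_j V_{s_j}a_j\bigr\|_L^2\geq0$, which is precisely positive definiteness. This step is purely computational and presents no obstacle.

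For the ``only if'' direction I would simply invoke \prettyref{thm:B1}: taking $L=H_{\tilde{K}}$ and the operators $V_s$ of \eqref{eq:B2}, part (6) of that theorem supplies $V_s^*V_t=K(s,t)$, giving the desired factorization. The isometry clause is then immediate from part (1): when $K(s,s)=I$ one has $\|V_sa\|_{H_{\tilde{K}}}^2=\langle a,K(s,s)a\rangle_H=\langle a,a\rangle_H$, so each $V_s$ is isometric; this transfers to an arbitrary factorizing family once the final identification is in place.

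The substantive step is the minimality assertion. Here I would compare inner products on the two dense spans. The reproducing property gives $\langle\tilde{K}(\cdot,(s,a)),\tilde{K}(\cdot,(t,b))\rangle_{H_{\tilde{K}}}=\tilde{K}((s,a),(t,b))=\langle a,K(s,t)b\rangle_H$, while for any factorizing family $\langle V_sa,V_tb\rangle_L=\langle a,V_s^*V_tb\rangle_H=\langle a,K(s,t)b\rangle_H$. Since these agree, I would define $W\colon H_{\tilde{K}}\to L$ on the span of $\{\tilde{K}(\cdot,(s,a))\}$ by $W\tilde{K}(\cdot,(s,a))=V_sa$ and extend linearly, and then show $W$ is an isometric bijection. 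The one point demanding care is well-definedness, since the assignment $(s,a)\mapsto\tilde{K}(\cdot,(s,a))$ need not be injective; but this is handled by the shared Gram matrix rather than by any separate argument, because a finite combination $\sum_ic_i\tilde{K}(\cdot,(s_i,a_i))$ and its image $\sum_ic_iV_{s_i}a_i$ have the \emph{same} squared norm, so a null vector on the left maps to a null vector on the right. The same identity makes $W$ isometric, and density of $\{V_sa\}$ in $L$ — which is exactly the minimality hypothesis — forces $W$ to have dense range, hence to extend to a unitary of $H_{\tilde{K}}$ onto $L$, yielding $L\simeq H_{\tilde{K}}$. The density of $\{\tilde{K}(\cdot,(s,a))\}$ in $H_{\tilde{K}}$ needed on the domain side is automatic from the construction of the RKHS.
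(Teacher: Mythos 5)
Your proposal is correct and follows essentially the same route as the paper: the Gram-matrix computation for the ``if'' direction, an appeal to \prettyref{thm:B1}(6) with $L=H_{\tilde{K}}$ for the ``only if'' direction, and the map $\tilde{K}(\cdot,(s,a))\mapsto V_{s}a$ extended by linearity and density to an isometric isomorphism for the minimality clause. Your explicit treatment of well-definedness (a null combination on one side maps to a null combination on the other, since both norms are computed from the same Gram matrix) is a detail the paper's proof passes over silently, but it is the same argument, not a different one.
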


\begin{proof}
The ``only if'' part is contained in \prettyref{thm:B1} (6). 

Conversely, assume $K\left(s,t\right)=V_{s}^{*}V_{t}$ as stated.
For all $\left(a_{i}\right)_{i=1}^{n}$ in $H$, and $\left(s_{i}\right)_{i=1}^{n}$
in $S$, 
\begin{align*}
\sum_{i,j}\left\langle a_{i},K\left(s_{i},s_{j}\right)a_{j}\right\rangle _{H} & =\sum_{i,j}\left\langle a_{i},V_{s_{i}}^{*}V_{s_{j}}a_{j}\right\rangle _{H}\\
 & =\sum_{i,j}\left\langle V_{s_{i}}a_{i},V_{s_{j}}a_{j}\right\rangle _{L}=\left\Vert \sum_{i}V_{s_{i}}a_{i}\right\Vert _{L}^{2}\geq0
\end{align*}
which shows that $K$ is positive definite.

Now, under the assumption $L=\overline{span}\left\{ V_{s}a:s\in S,a\in H\right\} $,
the map 
\[
\tilde{K}\left(\cdot,\left(s,a\right)\right)\longmapsto V_{s}a
\]
extends, by linearity and density, to a unique isometric isomorphism
from $H_{\tilde{K}}$ onto $L$, and so any such space $L$ is isomorphic
to the RKHS $H_{\tilde{K}}$.
\end{proof}
\begin{rem}
\label{rem:B3}The maps $V_{s}$ in \prettyref{thm:B1} and \prettyref{cor:B2},
as well as in other contexts, refer to vector-valued evaluation maps.
In the particular scalar case where $H=\mathbb{C}$, it becomes evident
that the evaluation functional appears (c.f. \eqref{eq:B9}):
\[
K\left(x,y\right)=ev_{x}\circ ev_{y}^{*}.
\]
In fact, this scalar version of \prettyref{thm:B1} is well-known
and is a standard method for recovering reproducing kernels. For a
more detailed discussion, we recommend consulting Section 2 of \cite{MR2891702}.
\end{rem}

Note that elements in $H_{\tilde{K}}$ are $\mathbb{C}$-valued functions
defined on $S\times H$, especially, they have the following properties: 
\begin{cor}
\label{cor:B3}Any function $F\in H_{\tilde{K}}$ satisfies that 
\begin{equation}
F\left(s,a\right)=\left\langle a,V_{s}^{*}F\right\rangle _{H_{\tilde{K}}},\quad\left(s,a\right)\in S\times H.\label{eq:B10}
\end{equation}
In particular, $F$ is conjugate linear in the second variable, and
\[
F\left(s,0\right)=0,\quad\forall s\in S.
\]
\end{cor}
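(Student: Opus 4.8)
The plan is to obtain \eqref{eq:B10} as a one-line consequence of the reproducing property of $H_{\tilde K}$, and then to read off the two ``in particular'' assertions from the physics convention for inner products. First I would apply the reproducing identity \eqref{eq:A3}, specialized to the scalar kernel $\tilde K$, at the point $x=(s,a)\in X=S\times H$: for every $F\in H_{\tilde K}$,
\[
F(s,a)=\langle \tilde K(\cdot,(s,a)),F\rangle_{H_{\tilde K}}.
\]
Next I would recognize, directly from the definition \eqref{eq:B2}, that $\tilde K(\cdot,(s,a))=V_s a$, so the right-hand side equals $\langle V_s a,F\rangle_{H_{\tilde K}}$.

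The second step is to move $V_s$ across the inner product. Using the adjoint $V_s^*\colon H_{\tilde K}\to H$, whose action is identified in \prettyref{thm:B1} (2), I would rewrite
\[
\langle V_s a,F\rangle_{H_{\tilde K}}=\langle a,V_s^* F\rangle_H,
\]
which is precisely \eqref{eq:B10}. The two consequences are then immediate. Since the paper adopts the convention that inner products are conjugate linear in the first variable and $a$ occupies that slot, the map $a\mapsto\langle a,V_s^*F\rangle_H$ is conjugate linear; hence $F(s,\cdot)$ is conjugate linear in the second variable. Taking $a=0$ (or simply invoking that a conjugate linear map fixes $0$) yields $F(s,0)=0$ for all $s\in S$.

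I do not expect a genuine obstacle here: once $V_s a=\tilde K(\cdot,(s,a))$ is recognized, the statement is a formal consequence of the reproducing identity and \prettyref{thm:B1} (2), with no estimates or limiting arguments required. The only point demanding care is the inner-product bookkeeping --- the reproducing pairing lives in $H_{\tilde K}$, whereas after applying $V_s^*$ the pairing lives in $H$ (so the subscript in \eqref{eq:B10} is to be read as the $H$-inner product, consistent with $V_s^*F\in H$) --- together with keeping $a$ in the conjugate-linear slot so that the conjugate linearity in the second variable comes out correctly.
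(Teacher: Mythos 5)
Your proposal is correct and follows the paper's own proof essentially verbatim: the reproducing property \eqref{eq:A3} applied at $(s,a)$, the identification $\tilde{K}(\cdot,(s,a))=V_{s}a$ from \eqref{eq:B2}, and the passage to $\left\langle a,V_{s}^{*}F\right\rangle _{H}$ via the adjoint, with conjugate linearity in the first slot giving the two consequences. Your remark that the pairing in \eqref{eq:B10} should be read as the $H$-inner product (since $V_{s}^{*}F\in H$) is a correct observation; the paper's proof indeed ends with the subscript $H$, so the subscript $H_{\tilde{K}}$ in the statement is a typo you have rightly flagged.
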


\begin{proof}
This follows from the reproducing property (see \eqref{eq:A3}) of
$H_{\tilde{K}}$: 
\[
F\left(s,a\right)=\left\langle \tilde{K}\left(\cdot,\left(s,a\right)\right),F\right\rangle _{H_{\tilde{K}}}=\left\langle V_{s}a,F\right\rangle _{H_{\tilde{K}}}=\left\langle a,V_{s}^{*}F\right\rangle _{H}.
\]
\end{proof}
\begin{rem}
In light of the above discussion (see Remark \ref{rem:B3}), \prettyref{cor:B3}
is a vector-valued reproducing property.
\end{rem}

\begin{cor}
\label{cor:B4}For any ONB $\left(\varphi_{i}\right)_{i\in\mathbb{N}}$
in $H_{\tilde{K}}$, the following operator identity holds:
\[
K\left(s,t\right)=\sum_{i\in\mathbb{N}}\left|V_{s}^{*}\varphi_{i}\left\rangle \right\langle V_{t}^{*}\varphi_{i}\right|.
\]
Here $\left|a\left\rangle \right\langle b\right|$ is Dirac's notation
for rank-1 operators, i.e., $c\mapsto a\left\langle b,c\right\rangle $. 
\end{cor}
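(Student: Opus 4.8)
The plan is to reduce the statement to the factorization $K(s,t)=V_{s}^{*}V_{t}$ already proved in \prettyref{thm:B1}(6), and then to resolve the identity operator on $H_{\tilde{K}}$ against the given ONB. Fix $s,t\in S$ and an arbitrary vector $c\in H$. First I would expand the element $V_{t}c\in H_{\tilde{K}}$ in the orthonormal basis $\left(\varphi_{i}\right)_{i\in\mathbb{N}}$,
\[
V_{t}c=\sum_{i\in\mathbb{N}}\left\langle \varphi_{i},V_{t}c\right\rangle _{H_{\tilde{K}}}\varphi_{i},
\]
the series converging in the norm of $H_{\tilde{K}}$ by Parseval.

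Next, since $V_{s}^{*}:H_{\tilde{K}}\rightarrow H$ is bounded, hence continuous, I would apply it term by term to obtain
\[
K\left(s,t\right)c=V_{s}^{*}V_{t}c=\sum_{i\in\mathbb{N}}\left\langle \varphi_{i},V_{t}c\right\rangle _{H_{\tilde{K}}}V_{s}^{*}\varphi_{i},
\]
now with convergence in $H$. The only step that genuinely needs justification is this interchange of $V_{s}^{*}$ with the infinite sum, and it is immediate: a bounded operator commutes with norm-convergent series. I would then rewrite the scalar coefficient using the adjoint relation (recalling the physics convention, linear in the second variable), namely $\left\langle \varphi_{i},V_{t}c\right\rangle _{H_{\tilde{K}}}=\left\langle V_{t}^{*}\varphi_{i},c\right\rangle _{H}$, which gives
\[
K\left(s,t\right)c=\sum_{i\in\mathbb{N}}\left(V_{s}^{*}\varphi_{i}\right)\left\langle V_{t}^{*}\varphi_{i},c\right\rangle _{H}=\sum_{i\in\mathbb{N}}\left|V_{s}^{*}\varphi_{i}\left\rangle \right\langle V_{t}^{*}\varphi_{i}\right|c,
\]
where the last equality is nothing but the definition of the rank-one operator $\left|a\left\rangle \right\langle b\right|:c\mapsto a\left\langle b,c\right\rangle$. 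Since $c\in H$ was arbitrary, this yields the asserted operator identity.

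The step I expect to be the only real subtlety is bookkeeping about the mode of convergence: the argument above delivers convergence of the partial sums in the strong operator topology, i.e.\ pointwise on each vector of $H$, which is exactly what the operator identity asserts. I would therefore state the identity as holding in that sense; no summability hypothesis on $\left\Vert V_{s}^{*}\varphi_{i}\right\Vert \left\Vert V_{t}^{*}\varphi_{i}\right\Vert$ is needed, and indeed such a hypothesis would be required only if one wanted the stronger claim of norm convergence of the operator series.
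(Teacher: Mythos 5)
Your proposal is correct and follows essentially the same route as the paper: the paper proves the identity by writing $K(s,t)=V_{s}^{*}I_{H_{\tilde{K}}}V_{t}$ and inserting the resolution of the identity $I_{H_{\tilde{K}}}=\sum_{i}\left|\varphi_{i}\left\rangle \right\langle \varphi_{i}\right|$, which is exactly your ONB expansion of $V_{t}c$ followed by term-by-term application of the bounded operator $V_{s}^{*}$, just executed vector-by-vector. Your explicit remark that the resulting convergence is in the strong operator topology is a careful reading of what the paper's symbolic manipulation (and its equivalent weak, matrix-element verification via \eqref{eq:A4}) actually delivers, but it is the same argument.
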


\begin{proof}
Using the identity 
\[
I_{H_{\tilde{K}}}=\sum_{i}\left|\varphi_{i}\left\rangle \right\langle \varphi_{i}\right|
\]
in $H_{\tilde{K}}$, it follows that 
\[
K\left(s,t\right)=V_{s}^{*}V_{t}=V_{s}^{*}I_{H_{\tilde{K}}}V_{t}=\sum_{i}\left|V_{s}^{*}\varphi_{i}\left\rangle \right\langle V_{t}^{*}\varphi_{i}\right|.
\]

Equivalently, 
\begin{eqnarray*}
\left\langle a,K\left(s,t\right)b\right\rangle _{H} & = & \tilde{K}\left(\left(s,a\right),\left(t,b\right)\right)\\
 & \underset{\left(\ref{eq:A4}\right)}{=} & \sum_{i\in\mathbb{N}}\varphi\left(s,a\right)\overline{\varphi\left(t,b\right)}\\
 & = & \sum_{i\in\mathbb{N}}\left\langle V_{s}a,\varphi_{i}\right\rangle \left\langle \varphi_{i},V_{t}b\right\rangle \\
 & = & \sum_{i\in\mathbb{N}}\left\langle a,V_{s}^{*}\varphi_{i}\right\rangle \left\langle V_{t}^{*}\varphi_{i},b\right\rangle 
\end{eqnarray*}
valid for all $s,t\in S$, and all $a,b\in H$. 
\end{proof}

\subsection{\protect\label{subsec:b1}Examples}

The current setting of operator-valued p.d. kernels contains a wide
range of well-known constructions in operator theory (see e.g., \cite{MR2760647}
and \cite{MR1976867}). Passing from $K$ to a scalar-valued p.d.
kernel $\tilde{K}$ offers a function-based approach, in contrast
with traditional Hilbert completions with abstract spaces of equivalence
classes. This also allows for direct evaluation and manipulation of
functions, due to the RKHS structure of the dilation space $H_{\tilde{K}}$.

A classical example is when 
\[
K\left(s,t\right)=\rho\left(s^{-1}t\right),\quad s,t\in G
\]
where $\rho:G\rightarrow B\left(H\right)$ is a representation of
the group $G$. Note that, in this case, $K\left(s,s\right)=\rho\left(e\right)=I_{H}$,
the identity operator on $H$, for all $s\in G$. 
\begin{example}
As an application, we reformulate the well-known power dilation of
a single contraction $A\in B\left(H\right)$. Here, it is assumed
that $\left\Vert A\right\Vert \leq1$. Consider 
\[
A^{\left(n\right)}:=\begin{cases}
A^{n} & n\geq0\\
\left(A^{*}\right)^{n} & n<0
\end{cases}
\]
and set $K:\mathbb{Z}\times\mathbb{Z}\rightarrow B\left(H\right)$
by 
\[
K\left(m,n\right)=A^{\left(n-m\right)}.
\]
This $K$ is positive definite, i.e., $\left\langle h,Th\right\rangle \geq0$,
for all $h=\left(h_{1},\cdots,h_{n}\right)$ with $h_{i}\in H$ (see
\eqref{eq:A5}); or, equivalently, the matrix below is p.d.: 
\begin{equation}
T=\begin{bmatrix}1 & A & A^{2} & \cdots\\
A^{*} & 1 & A & \ddots\\
A^{*2} & A^{*} & 1 & \ddots\\
\vdots & \ddots & \ddots & \ddots
\end{bmatrix}_{n\times n}\label{eq:B12}
\end{equation}

Then, let $X=\mathbb{Z}\times H$ and define $\tilde{K}:X\times X\rightarrow\mathbb{C}$
by 
\[
\tilde{K}\left(\left(m,h\right),\left(n,k\right)\right)=\left\langle h,K\left(m,n\right)k\right\rangle _{H}=\left\langle h,A^{\left(n-m\right)}k\right\rangle _{H}.
\]
Denote by $H_{\tilde{K}}$ the RKHS of $\tilde{K}$. Set $V:H\rightarrow H_{\tilde{K}}$
by 
\[
Vh=\tilde{K}\left(\cdot,\left(0,h\right)\right).
\]
This is an isometric embedding. Set $U:H_{\tilde{K}}\rightarrow H_{\tilde{K}}$
by 
\[
U\tilde{K}\left(\cdot,\left(m,h\right)\right)=\tilde{K}\left(\cdot,\left(m+1,h\right)\right).
\]
Note, $U$ is unitary. It follows from \prettyref{thm:B1} that
\[
V^{*}U^{n}Vh=V^{*}U^{n}\tilde{K}\left(\cdot,\left(0,h\right)\right)=V^{*}\tilde{K}\left(\cdot,\left(n,h\right)\right)=A^{n}h,
\]
that is, 
\[
A^{n}=V^{*}U^{n}V.
\]
\end{example}

\begin{rem}
An elementary approach to verify that the operator matrix \eqref{eq:B12}
is positive definite is to rewrite $T$ as \renewcommand{\arraystretch}{1.8}
\begin{align*}
T & =\begin{bmatrix}1 & A & A^{2} & \cdots\\
A^{*} & 0 & 0 & \ddots\\
A^{*2} & 0 & 0 & \ddots\\
\vdots & \ddots & \ddots & \ddots
\end{bmatrix}+\begin{bmatrix}0 & 0 & 0 & \cdots\\
0 & 1 & A & \cdots\\
0 & A^{*} & 1 & \cdots\\
\vdots & \vdots & \vdots & \ddots
\end{bmatrix}\\
 & =\begin{bmatrix}1\\
A^{*}\\
\vdots\\
A^{*n}
\end{bmatrix}\begin{bmatrix}1 & A & \cdots & A^{n}\end{bmatrix}-\begin{bmatrix}0\\
A^{*}\\
\vdots\\
A^{*n}
\end{bmatrix}\begin{bmatrix}0 & A & \cdots & A^{n}\end{bmatrix}+\begin{bmatrix}0 & 0 & 0 & \cdots\\
0 & 1 & A & \cdots\\
0 & A^{*} & 1 & \cdots\\
\vdots & \vdots & \vdots & \ddots
\end{bmatrix}
\end{align*}
Therefore, 
\begin{eqnarray*}
\left\langle h,Th\right\rangle  & = & \left\Vert h_{1}+Ah_{2}+\cdots+A^{n}h_{n}\right\Vert ^{2}-\left\Vert Ah_{2}+\cdots+A^{n}h_{n}\right\Vert ^{2}\\
 &  & +\left\Vert h_{2}+Ah_{3}+\cdots+A^{n-1}h_{n}\right\Vert ^{2}-\left\Vert Ah_{3}+\cdots+A^{n-1}h_{n}\right\Vert ^{2}\\
 &  & +\left\Vert h_{3}+Ah_{4}+\cdots+A^{n-2}h_{n}\right\Vert ^{2}-\left\Vert Ah_{4}+\cdots+A^{n-2}h_{n}\right\Vert ^{2}\\
 &  & \vdots\\
 &  & +\left\Vert h_{n-1}+Ah_{n}\right\Vert ^{2}-\left\Vert Ah_{n}\right\Vert ^{2}+\left\Vert h_{n}\right\Vert ^{2}\\
 & \geq & \left\Vert h_{1}+Ah_{2}+\cdots+A^{n}h_{n}\right\Vert ^{2}\\
 &  & +\left(1-\left\Vert A\right\Vert ^{2}\right)\left[\left\Vert h_{2}+Ah_{3}+\cdots+A^{n-1}h_{n}\right\Vert ^{2}+\cdots+\left\Vert h_{n}\right\Vert ^{2}\right]\geq0.
\end{eqnarray*}
\end{rem}

Another notable example that falls in the general setting of \prettyref{thm:B1}
is the dilation of positive operator valued measures (POVMs), namely,
any POVM $Q:\mathscr{B}_{S}\rightarrow B\left(H\right)$ is the compression
of a projection valued measure (PVM) $P:\mathscr{B}_{S}\rightarrow B\left(L\right)$
for some Hilbert space $L\supset H$, where $\mathscr{B}_{S}$ denotes
the Borel $\sigma$-algebra of $S$. See, e.g., \cite{MR4705975,MR4509531,MR4280112,MR4256782}.
\begin{example}
Given $Q:\mathscr{B}_{S}\rightarrow B\left(H\right)$ as above, one
may define $K:\mathscr{B}_{S}\times\mathscr{B}_{S}\rightarrow B\left(H\right)$
by 
\[
K\left(A,B\right)=Q\left(A\cap B\right),\quad\forall A,B\in\mathscr{B}_{S},
\]
which is positive definite. Then set 
\begin{gather*}
\tilde{K}:\left(\mathscr{B}_{S},H\right)\times\left(\mathscr{B}_{S},H\right)\rightarrow\mathbb{C},\\
\tilde{K}\left(\left(A,a\right),\left(B,b\right)\right)=\left\langle a,Q\left(A\cap B\right)b\right\rangle _{H}
\end{gather*}
for all $A,B\in\mathscr{B}_{S}$, and all $a,b\in H$. 

Let $V:H\rightarrow H_{\tilde{K}}$ be the isometric embedding,
\[
Va=\tilde{K}\left(\cdot,\left(S,a\right)\right)
\]
where $H_{\tilde{K}}$ denotes the RKHS of $\tilde{K}$. It follows
that 
\[
V^{*}\tilde{K}\left(\cdot,\left(A,a\right)\right)=Q\left(A\right)a.
\]

Moreover, let $P:\mathscr{B}\left(S\right)\rightarrow H_{\tilde{K}}$
be specified by 
\[
P\left(A\right)\tilde{K}\left(\cdot,\left(B,a\right)\right)=\tilde{K}\left(\cdot,\left(A\cap B,a\right)\right).
\]
Then $P\left(A\right)^{*}=P\left(A\right)=P\left(A\right)^{2}$, i.e.,
a selfadjoint projection in $H_{\tilde{K}}$, for all $A\in\mathscr{B}_{S}$.
Finally, 
\begin{align*}
V^{*}P\left(A\right)Va & =V^{*}P\left(A\right)\tilde{K}\left(\cdot,\left(S,a\right)\right)\\
 & =V^{*}\tilde{K}\left(\cdot,\left(A\cap S,a\right)\right)=V^{*}\tilde{K}\left(\cdot,\left(A,a\right)\right)=Q\left(A\right)a.
\end{align*}
That is, $Q=V^{*}PV$, where $P$ is a PVM acting in $H_{\tilde{K}}$. 
\end{example}

For the general theory of dilations of irreversible evolutions in
algebraic quantum theory we refer to the two pioneering books \cite{MR489494,MR582649},
covering both the theory and applications of completely positive maps,
including semigroups.

\section{\protect\label{sec:3}Schwartz' constructions}

Motivated by applications, in the present section we extend the framework
for operator valued positive definite kernels to spaces more general
than Hilbert spaces.

We explore an approach to operator theory that extends beyond the
usual settings, moving into the realm of topological vector spaces
$L$ and their duals $L^{*}$. This idea was introduced to the first
named author by Professor D. Alpay, who may have been influenced by
L. Schwartz and others. 

Here, we look at positive definite (p.d.) kernels $K$ defined on
$S\times S$ with values in continuous linear operators from $L$
to $L^{*}$. This setup includes, but is not limited to, our previously
discussed $B(H)$-valued kernels. By extending to these more general
vector spaces, such as Banach spaces, we aim to broaden the application
of these kernels. We will show how our earlier models using $K$ can
be adapted to this extended context, particularly by transforming
these operator-valued kernels into scalar-valued kernels $\tilde{K}$
on the product space $X=S\times L$. This transition illustrates the
versatility and potential of our approach in various mathematical
and practical scenarios.

Let $S$ be a set, $L$ a topological vector space, and $L^{*}$ its
\emph{conjugate} dual space. We consider a positive definite (p.d.)
kernel $K:S\times S\rightarrow\mathcal{L}\left(L,L^{*}\right)$, where
$\mathcal{L}\left(L,L^{*}\right)$ denotes the space of continuous
linear operators from $L$ to $L^{*}$. 

Specifically, the p.d. assumption on $K$ is that 
\[
\sum_{i,j=1}^{n}\left\langle a_{i},K\left(s_{i},s_{j}\right)a_{j}\right\rangle \geq0
\]
for all $\left(s_{i}\right)_{i=1}^{n}$ in $S$, $\left(a_{i}\right)_{i=1}^{n}$
in $L$, and all $n\in\mathbb{N}$. Here, $\left\langle \cdot,\cdot\right\rangle :L\times L^{*}\rightarrow\mathbb{C}$
is the canonical dual pairing, i.e., $\left\langle \ell,\ell'\right\rangle :=\ell'\left(\ell\right)$,
for all $\ell\in L$, and $\ell'\in L^{*}$. 

Define $X=S\times L$, and 
\[
\tilde{K}\left(\left(s,a\right),\left(t,b\right)\right):=\left\langle a,K\left(s,t\right)b\right\rangle .
\]
Then $\tilde{K}$ is a scalar-valued p.d. kernel on $X\times X$;
indeed, we have 
\begin{align*}
\sum_{i,j=1}^{n}\overline{c_{i}}c_{j}\tilde{K}\left(\left(s_{i},a_{i}\right),\left(s_{j},a_{j}\right)\right) & =\sum_{i,j=1}^{n}\overline{c_{i}}c_{j}\left\langle a_{i},K\left(s_{i},s_{j}\right)a_{j}\right\rangle \\
 & =\sum_{i,j=1}^{n}\left\langle c_{i}a_{i},K\left(s_{i},s_{j}\right)c_{j}a_{j}\right\rangle \geq0.
\end{align*}
Let $H_{\tilde{K}}$ be the corresponding RKHS. 

Then we get the following variant of \prettyref{thm:B1}:
\begin{thm}
\label{thm:C1}The family of operators $\left\{ V_{s}\right\} _{s\in S}$,
where $V_{s}:L\rightarrow H_{\tilde{K}}$, defined as 
\[
V_{s}a=\tilde{K}\left(\cdot,\left(s,a\right)\right):X=S\times L\rightarrow\mathbb{C},\quad a\in L,
\]
satisfies the following:
\begin{enumerate}
\item For all $s\in S$, and all $a\in L$, 
\[
\left\Vert V_{s}a\right\Vert _{H_{\tilde{K}}}^{2}=\left\langle a,K\left(s,s\right)a\right\rangle .
\]
\item The adjoint $V_{s}^{*}:H_{\tilde{K}}\rightarrow L^{*}$ is determined
by 
\[
V_{s}^{*}\tilde{K}\left(\cdot,\left(t,b\right)\right)=K\left(s,t\right)b.
\]
\item $V_{s}V_{s}^{*}:H_{\tilde{K}}\rightarrow H_{\tilde{K}}$ is given
by 
\[
V_{s}V_{s}^{*}\tilde{K}\left(\cdot,\left(t,b\right)\right)=\tilde{K}\left(\cdot,\left(s,K\left(s,t\right)b\right)\right).
\]
\item For all $s_{1},s_{2},\cdots,s_{n}$ in $S$, 
\begin{multline*}
\left(V_{s_{1}}V_{s_{1}}^{*}\right)\cdots\left(V_{s_{n}}V_{s_{n}}^{*}\right)\tilde{K}\left(\cdot,\left(t,b\right)\right)\\
=\tilde{K}\left(\cdot,\left(s_{1},K\left(s_{1},s_{2}\right)\cdots K\left(s_{n-1},s_{n}\right)K\left(s_{n},t\right)b\right)\right).
\end{multline*}
\end{enumerate}
\begin{enumerate}[resume]
\item For all $s,s'\in S$, and $b\in L$, 
\[
V_{s'}V_{s}^{*}\tilde{K}\left(\cdot,\left(t,b\right)\right)=\tilde{K}\left(\cdot,\left(s',K\left(s,t\right)b\right)\right).
\]
\item $V_{s}^{*}V_{t}b=K\left(s,t\right)b$, and 
\[
\left(V_{s_{1}}^{*}V_{t_{1}}\right)\cdots\left(V_{s_{n}}^{*}V_{t_{n}}\right)b=K\left(s_{1},t_{1}\right)\cdots K\left(s_{n-1},t_{n-1}\right)K\left(s_{n},t_{n}\right)b.
\]
\end{enumerate}
\end{thm}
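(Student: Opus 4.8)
The plan is to mirror the proof of \prettyref{thm:B1} as closely as possible, since the algebraic skeleton---reproducing property, composition of the $V_s$, and induction---is identical; the only genuine novelty lies in making sense of the adjoint $V_s^*$ when the source space $L$ is merely a topological vector space rather than a Hilbert space, so that $V_s^*$ must land in $L^*$ and the single Hilbert inner product of \prettyref{thm:B1} is replaced throughout by the dual pairing $\langle\cdot,\cdot\rangle:L\times L^*\rightarrow\mathbb{C}$.

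First I would record that each $V_s:L\rightarrow H_{\tilde{K}}$ is a well-defined continuous linear operator. Linearity is immediate from $V_s a=\tilde{K}(\cdot,(s,a))$ together with the linearity of $\tilde{K}((t,b),(s,a))=\langle b,K(t,s)a\rangle$ in $a$. Part (1) then follows directly from the reproducing property of $H_{\tilde{K}}$: $\|V_s a\|_{H_{\tilde{K}}}^2=\langle\tilde{K}(\cdot,(s,a)),\tilde{K}(\cdot,(s,a))\rangle_{H_{\tilde{K}}}=\tilde{K}((s,a),(s,a))=\langle a,K(s,s)a\rangle$. Since $K(s,s)\in\mathcal{L}(L,L^*)$ is continuous and the pairing is continuous, the right-hand side is a continuous function of $a$, which yields continuity of $a\mapsto\|V_s a\|_{H_{\tilde{K}}}$ and hence of $V_s$.

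The main obstacle is part (2), the construction of the adjoint $V_s^*:H_{\tilde{K}}\rightarrow L^*$. For $F\in H_{\tilde{K}}$ I would define the functional $V_s^*F$ on $L$ by $a\mapsto\langle V_s a,F\rangle_{H_{\tilde{K}}}$, and the key point is to verify that this is genuinely an element of $L^*$: it is conjugate-linear in $a$ (since $V_s$ is linear and the inner product is conjugate-linear in its first slot, matching the convention of the pairing), and by Cauchy--Schwarz together with the continuity of $V_s$ established above, $|\langle V_s a,F\rangle_{H_{\tilde{K}}}|\le\|V_s a\|_{H_{\tilde{K}}}\|F\|_{H_{\tilde{K}}}$, so $a\mapsto\langle V_s a,F\rangle_{H_{\tilde{K}}}$ is continuous, i.e.\ $V_s^*F\in L^*$. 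With the adjoint in hand, evaluating on $F=\tilde{K}(\cdot,(t,b))$ and using the reproducing property gives $\langle a,V_s^*\tilde{K}(\cdot,(t,b))\rangle=\langle V_s a,\tilde{K}(\cdot,(t,b))\rangle_{H_{\tilde{K}}}=\tilde{K}((s,a),(t,b))=\langle a,K(s,t)b\rangle$ for every $a\in L$; since this holds for all $a$, the two elements of $L^*$ coincide and $V_s^*\tilde{K}(\cdot,(t,b))=K(s,t)b$.

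Once (1) and (2) are in place, parts (3)--(6) are formal and follow exactly as in \prettyref{thm:B1}. For (3) I would compose, $V_sV_s^*\tilde{K}(\cdot,(t,b))=V_s(K(s,t)b)=\tilde{K}(\cdot,(s,K(s,t)b))$; part (4) then follows by induction on $n$, iterating (3); part (5) is immediate from (2) and the definition of $V_{s'}$; and $V_s^*V_tb=V_s^*\tilde{K}(\cdot,(t,b))=K(s,t)b$ is a restatement of (2), with the multi-factor identity in (6) obtained by induction from (5). I expect no difficulty in these last steps, the only care required being to track which maps take values in $L$ and which in $L^*$.
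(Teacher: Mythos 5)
The paper in fact gives no proof of \prettyref{thm:C1} at all: it is stated as a ``variant'' of \prettyref{thm:B1}, the intended argument being exactly the reproducing-property computations you mirror. Your treatment of (1), of (2) on the kernel functions, and of the identity $V_{s}^{*}V_{t}b=K\left(s,t\right)b$ matches that implicit proof and is correct; your extra verification that $V_{s}^{*}F$ lies in $L^{*}$ for \emph{arbitrary} $F\in H_{\tilde{K}}$ is a welcome addition, although the step ``the pairing is continuous'' (hence $a\mapsto\left\langle a,K\left(s,s\right)a\right\rangle$ is continuous) is not available for general non-normable $L$: the canonical pairing $L\times L^{*}\rightarrow\mathbb{C}$ is only separately continuous in that generality. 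Fortunately that step is not needed for the statement as written, since (2) only asserts the formula for $V_{s}^{*}$ on the functions $\tilde{K}\left(\cdot,\left(t,b\right)\right)$, whose images $K\left(s,t\right)b$ are in $L^{*}$ by hypothesis.

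The genuine gap is in your claim that (3)--(6) are ``formal and follow exactly as in \prettyref{thm:B1}.'' They do not: this is precisely where the $L$-versus-$L^{*}$ bookkeeping you defer to at the end breaks the argument rather than being a formality. In \prettyref{thm:B1} the vector $V_{s}^{*}\tilde{K}\left(\cdot,\left(t,b\right)\right)=K\left(s,t\right)b$ lies back in $H$, so applying $V_{s}$ to it is legitimate; here $K\left(s,t\right)b\in L^{*}$ while the domain of $V_{s}$ is $L$, so the key step $V_{s}V_{s}^{*}\tilde{K}\left(\cdot,\left(t,b\right)\right)=V_{s}\left(K\left(s,t\right)b\right)$ in your proof of (3) is undefined, as is the symbol $\tilde{K}\left(\cdot,\left(s,K\left(s,t\right)b\right)\right)$ itself, whose second slot must belong to $X=S\times L$. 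The same objection applies to the iterated products in (4) and (6): each factor $K\left(s_{i},s_{i+1}\right)$ maps $L\rightarrow L^{*}$, so consecutive factors do not compose. (This defect is really inherited from the paper's statement, and it is invisible in the $B\left(H\right)$ case because of the Riesz identification $H^{*}\simeq H$.) A correct writeup must either add a hypothesis making the compositions meaningful---for instance a continuous embedding $L\hookrightarrow L^{*}$ (a Gelfand-triple situation) together with the assumption that each $K\left(s,t\right)$ maps $L$ into the image of $L$---or reformulate (3)--(6) weakly, e.g.\ as identities for the sesquilinear forms $\left\langle a,V_{s}^{*}\cdots V_{t}b\right\rangle$; asserting that the Hilbert-space argument goes through verbatim conceals exactly the point where it fails.
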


\section{\protect\label{sec:4}$H$-valued Gaussian processes}

In probability and stochastics, we think of Gaussian processes as
follows: Specify a probability space, and a system of random variables,
indexed by a set, say $S$, so $\left\{ W_{s},s\in S\right\} $. We
say that this is a Gaussian process, if the system $\left\{ W_{s},s\in S\right\} $
is jointly Gaussian. The corresponding covariance kernel will then
be positive definite (p.d.) , and by a standard construction, one
can show that every positive definite kernel arises this way. Note
that from first principles, every p.d. kernel on $S\times S$ arises
this way from a choice of Gaussian process $\left\{ W_{s},s\in S\right\} $.
In particular, it follows that Gaussian processes are determined by
their first two moments; where the p.d. kernel in question is interpreted
as second moments. The mean of the process is \textquotedblleft first
moments.\textquotedblright{} Suppose $\left\{ W_{h},h\in H\right\} $
is indexed by a Hilbert space $H$. If the inner product of $H$ coincides
with the second moments, i.e., the covariance kernel, we talk about
a Gaussian Hilbert space. See e.g., \cite{MR4641110}.

Since Ito's early work, the stochastic analysis of Gaussian processes
has often relied on Hilbert space techniques, as seen in \cite{MR3402823,MR4302453,MR45307}
and related literature. Here we want to call attention to the following
distinction: (i) scalar-valued Gaussian processes, potentially indexed
by a Hilbert space via an Ito-isometry, and (ii) Hilbert space-valued
Gaussian processes, see e.g., \cite{MR4641110,MR4414825,MR4101087,MR4073554,MR3940383}. 

We focus on the latter (ii), as it offers a more versatile approach
to model complex covariance structures in large datasets. This construction,
detailed below, is based on the results from \prettyref{sec:2}, utilizing
both $B(H)$-valued positive definite kernels, and their induced scalar
valued dilations, see \prettyref{thm:B1} and Corollary \ref{cor:B2}.

A key feature of Gaussian processes is that they are determined by
their first two moments, also when interpretated in a more general
metric framework. See e.g., \cite{MR4728479,MR4260603,MR3447225}.
\begin{defn}
Let $S$ be a set, and $H$ a Hilbert space. We say $\left\{ W_{s}\right\} _{s\in S}$
is an $H$-valued Gaussian process if, for all $a,b\in H$, and all
$s,t\in S$, 
\[
\mathbb{E}\left[\left\langle a,W_{s}\right\rangle _{H}\left\langle W_{t},b\right\rangle _{H}\right]=\left\langle a,K\left(s,t\right)b\right\rangle _{H}
\]
where $K$ is a $B\left(H\right)$-valued p.d. kernel. In particular,
\[
\left\langle W_{s},a\right\rangle _{H}\sim N\left(0,\left\langle a,K\left(s,s\right)a\right\rangle _{H}\right),
\]
i.e., mean zero Gaussian, with variance $\left\langle a,K\left(s,s\right)a\right\rangle _{H}$.
\end{defn}

Fix $S\times S\xrightarrow{\;K\;}B\left(H\right)$, and define $\tilde{K}:\left(S\times H\right)\times\left(S\times H\right)\rightarrow\mathbb{C}$,
given by 
\begin{equation}
\tilde{K}\left(\left(s,a\right),\left(t,b\right)\right)=\left\langle a,K\left(s,t\right)b\right\rangle _{H},\quad\forall s,t\in S,\ \forall a,b\in H,\label{eq:d1}
\end{equation}
defined on $S\times H$. Let $H_{\tilde{K}}$ be the corresponding
RKHS (consisting of scalar-valued functions on $S\times H$).
\begin{lem}
For all $\varphi\in H_{\tilde{K}}$, $s\in S$, and $a,b\in H$, we
have 
\begin{equation}
\varphi\left(s,a+b\right)=\varphi\left(s,a\right)+\varphi\left(s,b\right).\label{eq:d2}
\end{equation}
(See also Corollary \ref{cor:B3}.)
\end{lem}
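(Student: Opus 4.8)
The quickest route is to observe that \eqref{eq:d2} is precisely the additivity contained in the conjugate-linearity already recorded in \prettyref{cor:B3}. Concretely, the plan is to invoke the vector-valued reproducing identity
\[
\varphi\left(s,a\right)=\left\langle a,V_{s}^{*}\varphi\right\rangle _{H},\qquad\left(s,a\right)\in S\times H,
\]
where $V_{s}^{*}\varphi\in H$. Since, in the physics convention, $\left\langle \cdot,\cdot\right\rangle _{H}$ is additive (indeed conjugate linear) in its first slot, I would then simply compute
\[
\varphi\left(s,a+b\right)=\left\langle a+b,V_{s}^{*}\varphi\right\rangle _{H}=\left\langle a,V_{s}^{*}\varphi\right\rangle _{H}+\left\langle b,V_{s}^{*}\varphi\right\rangle _{H}=\varphi\left(s,a\right)+\varphi\left(s,b\right),
\]
which is \eqref{eq:d2}. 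On this reading there is no real obstacle: the lemma is an immediate corollary of \prettyref{cor:B3}.

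For a self-contained argument that does not quote \prettyref{cor:B3}, I would instead first verify \eqref{eq:d2} on the generating sections. For $\varphi=\tilde{K}\left(\cdot,\left(t,c\right)\right)$, the defining formula \eqref{eq:d1} gives $\varphi\left(s,a\right)=\left\langle a,K\left(s,t\right)c\right\rangle _{H}$, so additivity in $a$ is immediate from additivity of $\left\langle \cdot,\cdot\right\rangle _{H}$ in its first argument. By linearity this propagates to every finite combination $\sum_{j}d_{j}\tilde{K}\left(\cdot,\left(t_{j},c_{j}\right)\right)$, that is, to a dense subspace of $H_{\tilde{K}}$.

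The only step requiring care---and hence the ``main obstacle,'' such as it is---is the passage from this dense subspace to all of $H_{\tilde{K}}$. Given $\varphi\in H_{\tilde{K}}$, I would choose $\varphi_{n}$ in the dense span with $\varphi_{n}\to\varphi$ in $H_{\tilde{K}}$ and use that point evaluation at each $x=\left(s,a\right)$ is the bounded functional $\psi\mapsto\left\langle \tilde{K}\left(\cdot,x\right),\psi\right\rangle _{H_{\tilde{K}}}$, so that $\varphi_{n}\left(x\right)\to\varphi\left(x\right)$ pointwise. Evaluating at the three points $\left(s,a+b\right)$, $\left(s,a\right)$, $\left(s,b\right)$ and letting $n\to\infty$ in $\varphi_{n}\left(s,a+b\right)=\varphi_{n}\left(s,a\right)+\varphi_{n}\left(s,b\right)$ then yields \eqref{eq:d2}. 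This limiting step is entirely routine, since boundedness of point evaluations is built into the RKHS structure; the same reproducing identity simultaneously delivers the homogeneity $\varphi\left(s,\lambda a\right)=\overline{\lambda}\,\varphi\left(s,a\right)$ and $\varphi\left(s,0\right)=0$, recovering the full content of \prettyref{cor:B3}.
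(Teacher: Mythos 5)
Your proof is correct---both versions of it---and it is worth comparing with what the paper actually does. The paper's proof works only with the generating functions: using \eqref{eq:d1} it computes that $\tilde{K}\left(\cdot,\left(t,b_{1}+b_{2}\right)\right)=\tilde{K}\left(\cdot,\left(t,b_{1}\right)\right)+\tilde{K}\left(\cdot,\left(t,b_{2}\right)\right)$, i.e.\ additivity of the kernel embedding in its vector \emph{index}, and then declares that verifying the claim on generating functions ``suffices,'' leaving implicit the step that transfers additivity to an arbitrary $\varphi\in H_{\tilde{K}}$ (that step can be completed either through the reproducing property or through density). Your two arguments supply exactly this missing step, in two different ways. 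The first, via \prettyref{cor:B3}, is the shortest possible route: since $\varphi\left(s,a\right)=\left\langle a,V_{s}^{*}\varphi\right\rangle _{H}$ holds for \emph{every} $\varphi\in H_{\tilde{K}}$, additivity (indeed conjugate linearity) in $a$ is inherited from the first slot of the inner product, so the lemma is literally a special case of an already-proved statement---which is precisely what the paper's parenthetical ``(See also Corollary \ref{cor:B3}.)'' hints at. The second, self-contained argument follows the paper's strategy (verify on generating elements, then extend), but it does honestly what the paper only asserts: it checks additivity at the \emph{evaluation point} on the dense span and passes to the limit using boundedness of point evaluations. Either route is complete and rigorous; the \prettyref{cor:B3} route buys brevity and exposes the lemma as a corollary of Section \ref{sec:2}, while the density route buys independence from the earlier operator-theoretic machinery and makes the paper's ``it suffices'' precise.
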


\begin{proof}
It suffices to verify \eqref{eq:d2} for the generating functions
$\varphi\left(t,b\right)=\tilde{K}\left(\cdot,\left(t,b\right)\right)$
in $H_{\tilde{K}}$, which are functions defined on $S\times H$.
By definition, 
\begin{eqnarray*}
\varphi\left(t,b_{1}+b_{2}\right)\left(s,a\right) & = & \left\langle \tilde{K}\left(s,a\right),\tilde{K}\left(t,b_{1}+b_{2}\right)\right\rangle _{H_{\tilde{K}}}\\
 & \underset{\text{by \ensuremath{\left(\ref{eq:d1}\right)}}}{=} & \left\langle a,K\left(s,t\right)\left(b_{1}+b_{2}\right)\right\rangle _{H}\\
 & = & \left\langle a,K\left(s,t\right)b_{1}\right\rangle _{H}+\left\langle a,K\left(s,t\right)b_{2}\right\rangle _{H}\\
 &  & \text{since \ensuremath{K\left(s,t\right)\in B\left(H\right)}}\\
 & = & \varphi\left(t,b_{1}\right)\left(s,a\right)+\varphi\left(t,b_{2}\right)\left(s,a\right).
\end{eqnarray*}
\end{proof}
\begin{cor}
For any ONB $\left\{ e_{i}\right\} $ in $H$, we have the following
identity for functions $\varphi\in H_{\tilde{K}}$:
\begin{equation}
\sum_{i}\varphi\left(s,e_{i}\right)\left\langle e_{i},a\right\rangle _{H}=\varphi\left(s,a\right),\quad\forall\left(s,a\right)\in S\times H.\label{eq:d3}
\end{equation}
\end{cor}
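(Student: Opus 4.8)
The plan is to read \eqref{eq:d3} as a Parseval-type reconstruction of the bounded conjugate-linear functional $a\mapsto\varphi(s,a)$ from its values on a single orthonormal basis. The decisive input is \prettyref{cor:B3}, which represents this functional by one vector of $H$: for a fixed $\varphi\in H_{\tilde K}$ and $s\in S$, put $w:=V_s^{*}\varphi\in H$ (the operators $V_s,V_s^{*}$ coming from \prettyref{thm:B1}), so that $\varphi(s,a)=\langle a,w\rangle_{H}$ for every $a\in H$, and in particular $\varphi(s,e_i)=\langle e_i,w\rangle_{H}$. Once $\varphi(s,\cdot)$ is known to arise from a genuine vector $w$, identity \eqref{eq:d3} becomes nothing but the statement that the Fourier coefficients of $a$ against $\{e_i\}$ reassemble $\langle a,w\rangle_{H}$.

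Concretely, I would proceed as follows. First, fix $\varphi$ and $s$ and introduce $w=V_s^{*}\varphi$, thereby reducing the whole assertion to an identity inside $H$. Second, expand $a=\sum_i\langle e_i,a\rangle_{H}\,e_i$, the series converging in $H$. Third, use that $a\mapsto\varphi(s,a)$ is bounded, additive, and conjugate-homogeneous --- boundedness and the representation from \prettyref{cor:B3}, additivity already recorded in \eqref{eq:d2} --- to pass $\varphi(s,\cdot)$ through the convergent sum term by term; collecting terms, and recalling $\varphi(s,e_i)=\langle e_i,w\rangle_{H}$, recovers exactly the orthonormal expansion of $w$ paired against $a$, which is \eqref{eq:d3}. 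One should be careful here to track the physics (second-variable-linear) convention through the conjugate-linear second slot, since the pairing $\langle e_i,a\rangle_{H}$ enters via the coefficients of $a$.

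As a variant that avoids any limit interchange, I would first check \eqref{eq:d3} on the generating functions $\varphi=\tilde K(\cdot,(t,c))$, where $\varphi(s,a)=\langle a,K(s,t)c\rangle_{H}$ and the claim collapses to the ONB expansion of the single vector $K(s,t)c\in H$, and then extend to all of $H_{\tilde K}$ by linearity and density, using that each evaluation $\varphi\mapsto\varphi(s,a)$ is continuous on $H_{\tilde K}$ (again by \prettyref{cor:B3} and \prettyref{cor:B2}). I expect the only real obstacle to be the justification of the termwise summation when $H$ is infinite-dimensional: this is precisely where the boundedness of $a\mapsto\varphi(s,a)=\langle a,V_s^{*}\varphi\rangle_{H}$ --- equivalently, the square-summability of the coefficients $\langle e_i,V_s^{*}\varphi\rangle_{H}$ --- is used, guaranteeing that the series in \eqref{eq:d3} converges absolutely and may be rearranged; the remainder is routine bookkeeping.
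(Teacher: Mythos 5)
Your approach is correct and is essentially the paper's own: the paper likewise reduces \eqref{eq:d3} to the ONB representation $a=\sum_i e_i\langle e_i,a\rangle_H$ together with Parseval's identity $\sum_i\langle a,e_i\rangle_H\langle e_i,b\rangle_H=\langle a,b\rangle_H$, with the boundedness of $K(s,t)$ (equivalently the representation $\varphi(s,\cdot)=\langle\cdot,V_s^*\varphi\rangle_H$ from \prettyref{cor:B3}) justifying the termwise passage; your primary route via $w=V_s^*\varphi$ even streamlines this slightly, since it applies to every $\varphi\in H_{\tilde K}$ at once and avoids the density step of your variant. One substantive point, however: the conjugation issue you flag but do not resolve actually matters. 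Since $\varphi(s,\cdot)$ is conjugate-linear, passing it through $a=\sum_i e_i\langle e_i,a\rangle_H$ conjugates the coefficients, so what your argument (and the paper's own closing Parseval formula) actually yields is
\[
\varphi\left(s,a\right)=\sum_i\varphi\left(s,e_i\right)\left\langle a,e_i\right\rangle_H ,
\]
i.e., \eqref{eq:d3} with the pairing reversed. As printed, \eqref{eq:d3} cannot hold for complex $H$: its left-hand side is linear in $a$ while its right-hand side is conjugate-linear. This is a misprint in the statement rather than a defect of your method, but your assertion that the orthonormal expansion of $w$ paired against $a$ ``is \eqref{eq:d3}'' verbatim needs this correction.
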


\begin{proof}
Since $K\left(s,t\right)\in B\left(H\right)$ for all $\left(s,t\right)\in S\times S$,
\eqref{eq:d3} follows from the ONB representation in $H$, i.e.,
\[
\sum_{i}e_{i}\left\langle e_{i},a\right\rangle _{H}=a,\quad\forall a\in H,
\]
and 
\[
\sum\left|\left\langle e_{i},a\right\rangle _{H}\right|^{2}=\left\Vert a\right\Vert _{H}^{2}.
\]
Similarly, 
\[
\sum_{i}\left\langle a,e_{i}\right\rangle _{H}\left\langle e_{i},b\right\rangle _{H}=\left\langle a,b\right\rangle _{H},\quad\forall a,b\in H.
\]
\end{proof}
Let $K$, $\tilde{K}$, $S$, $H$ be as above. Introduce the operators
$\left(V_{s}\right)_{s\in S}$ from \prettyref{thm:B1}. Recall that
\begin{equation}
V_{s}^{*}V_{t}=K\left(s,t\right),\quad\forall s,t\in S.\label{eq:d7}
\end{equation}
The argument for the identity \eqref{eq:d7} relies on the pair of
operators, where 
\[
V_{t}a=\tilde{K}_{\left(t,a\right)}\left(\cdot\right):=\tilde{K}\left(\cdot,\left(t,a\right)\right)
\]
and 
\[
V_{s}^{*}\left(\tilde{K}_{\left(t,a\right)}\right)=K\left(s,t\right)a.
\]
 
\[
\xymatrix{ & H_{\tilde{K}}\ar[dr]^{V_{s}^{*}}\\
H\ar[rr]_{V_{s}^{*}V_{t}}\ar[ur]^{V_{t}} &  & H
}
\]
We verify that $V_{s}^{*}V_{t}=K\left(s,t\right)$, or equivalently,
\[
\left\langle V_{s}a,V_{t}b\right\rangle _{H_{\tilde{K}}}=\left\langle a,K\left(s,t\right)t\right\rangle _{H}
\]
for all $s,t\in S$, and all $a,b\in H$.

In what follows, we assume $H_{\tilde{K}}$ is separable. 
\begin{defn}
Consider the $H$-valued Gaussian process $W_{s}:\Omega\rightarrow H$,
\begin{equation}
W_{t}=\sum_{i}\left(V_{t}^{*}\varphi_{i}\right)Z_{i},\label{eq:d8}
\end{equation}
where $\left(\varphi_{i}\right)$ is an ONB in $H_{\tilde{K}}$. Following
standard conventions, here $\left\{ Z_{i}\right\} $ refers to a choice
of an independent identically distributed (i.i.d.) system of standard
scalar Gaussians $N(0,1)$ random variables, and with an index matching
the choice of ONB.
\end{defn}

\begin{thm}
\label{thm:B4}We have 
\begin{equation}
\mathbb{E}\left[\left\langle a,W_{s}\right\rangle _{H}\left\langle W_{t},b\right\rangle _{H}\right]=\left\langle a,K\left(s,t\right)b\right\rangle _{H}.\label{eq:d9}
\end{equation}
\end{thm}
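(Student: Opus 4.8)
The plan is to expand both factors against the series \eqref{eq:d8} defining $W$, reduce the resulting double sum using independence of the $Z_i$, and then recognize the surviving single sum as the kernel via Corollary \ref{cor:B4}. Concretely, pairing $a$ and $b$ against \eqref{eq:d8} and using the physics convention (linear in the second slot, conjugate linear in the first), I would write $\langle a,W_{s}\rangle_{H}=\sum_{i}\langle a,V_{s}^{*}\varphi_{i}\rangle_{H}Z_{i}$ and, since each $Z_{j}$ is real, $\langle W_{t},b\rangle_{H}=\sum_{j}\langle V_{t}^{*}\varphi_{j},b\rangle_{H}Z_{j}$. Multiplying and taking expectation, the relation $\mathbb{E}[Z_{i}Z_{j}]=\delta_{ij}$ collapses the double sum to $\sum_{i}\langle a,V_{s}^{*}\varphi_{i}\rangle_{H}\langle V_{t}^{*}\varphi_{i},b\rangle_{H}$.

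The final step identifies this sum with the right-hand side of \eqref{eq:d9}. By Corollary \ref{cor:B4} (equivalently, by applying the ONB expansion \eqref{eq:A4} to $\tilde{K}$ together with the reproducing identity $\varphi_{i}(s,a)=\langle a,V_{s}^{*}\varphi_{i}\rangle_{H}$ of Corollary \ref{cor:B3}), one has precisely
\[
\sum_{i}\langle a,V_{s}^{*}\varphi_{i}\rangle_{H}\langle V_{t}^{*}\varphi_{i},b\rangle_{H}=\tilde{K}\bigl((s,a),(t,b)\bigr)=\langle a,K(s,t)b\rangle_{H},
\]
which is the asserted identity \eqref{eq:d9}.

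The one point that genuinely needs care --- and which I expect to be the main obstacle --- is convergence, since $H_{\tilde{K}}$ is only assumed separable (hence possibly infinite dimensional) and $W_{t}$ itself need not converge as a strong $H$-valued random variable. The observation that rescues the argument is that the \emph{scalar} series $\langle a,W_{s}\rangle_{H}=\sum_{i}\langle a,V_{s}^{*}\varphi_{i}\rangle_{H}Z_{i}$ does converge in $L^{2}(\Omega)$: by Parseval applied to the ONB $(\varphi_{i})$ and then Theorem \ref{thm:B1}(1),
\[
\sum_{i}\bigl|\langle a,V_{s}^{*}\varphi_{i}\rangle_{H}\bigr|^{2}=\sum_{i}\bigl|\langle V_{s}a,\varphi_{i}\rangle_{H_{\tilde{K}}}\bigr|^{2}=\left\Vert V_{s}a\right\Vert _{H_{\tilde{K}}}^{2}=\langle a,K(s,s)a\rangle_{H}<\infty,
\]
so the partial sums form a Cauchy sequence in $L^{2}(\Omega)$. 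Consequently $\langle a,W_{s}\rangle_{H}$ and $\langle W_{t},b\rangle_{H}$ are bona fide elements of $L^{2}(\Omega)$, their product lies in $L^{1}(\Omega)$ by Cauchy--Schwarz, and the covariance is a continuous bilinear functional on $L^{2}(\Omega)\times L^{2}(\Omega)$; this continuity is exactly what licenses passing the expectation through the infinite sums and applying $\mathbb{E}[Z_{i}Z_{j}]=\delta_{ij}$ in the limit. I would therefore establish this convergence estimate \emph{first}, so that every manipulation in the two preceding paragraphs is carried out on honest $L^{2}(\Omega)$ limits rather than on formal series.
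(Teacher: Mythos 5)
Your proof is correct and follows essentially the same route as the paper's: expand both pairings via \eqref{eq:d8}, collapse the double sum with $\mathbb{E}[Z_iZ_j]=\delta_{ij}$, and identify the surviving sum $\sum_i\langle a,V_s^*\varphi_i\rangle\langle V_t^*\varphi_i,b\rangle$ with $\langle a,K(s,t)b\rangle_H$ --- the paper does this via Parseval and $V_s^*V_t=K(s,t)$ from Theorem \ref{thm:B1}(6), which is exactly the content of Corollary \ref{cor:B4} that you invoke. Your preliminary $L^2(\Omega)$-convergence estimate is a sound refinement of a point the paper's formal computation leaves implicit, but it does not change the substance of the argument.
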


\begin{proof}
Note that $\mathbb{E}\left[Z_{i}Z_{j}\right]=\delta_{i,j}$. From
this, we get
\begin{align*}
\text{LHS}_{\left(\ref{eq:d9}\right)} & =\mathbb{E}\left[\left\langle a,W_{s}\right\rangle _{H}\left\langle W_{t},b\right\rangle _{H}\right]\\
 & =\sum_{i}\sum_{j}\left\langle a,V_{s}^{*}\varphi_{i}\right\rangle \left\langle V_{t}^{*}\varphi_{j},b\right\rangle \mathbb{E}\left[Z_{i}Z_{j}\right]\\
 & =\sum_{i}\left\langle V_{s}a,\varphi_{i}\right\rangle _{H_{\tilde{K}}}\left\langle \varphi_{i},V_{t}b\right\rangle _{H_{\tilde{K}}}\\
 & =\left\langle V_{s}a,V_{t}b\right\rangle _{H_{\tilde{K}}}=\left\langle a,V_{s}^{*}V_{t}b\right\rangle _{H}=\left\langle a,K\left(s,t\right)b\right\rangle _{H}.
\end{align*}
\end{proof}
\begin{rem}
The central limit theorem, applied to the i.i.d. standard normal random
variables $\left\{ Z_{i}\right\} $ on the right-hand side of equation
\eqref{eq:d8}, ensures that $\left\{ W_{t}\right\} _{t\in S}$ is
a well-defined Gaussian process with the specified properties. 

While different choices of orthonormal bases $\left(\varphi_{i}\right)$
and i.i.d. standard $N\left(0,1\right)$ Gaussian random variables
$\left(Z_{i}\right)$ will result in distinct Gaussian processes $\left\{ W_{t}\right\} _{t\in S}$,
they all share the same covariance structure given by equation \eqref{eq:d9}.
Furthermore, once $\left(\varphi_{i}\right)$ and $\left(Z_{i}\right)$
are fixed, the corresponding Gaussian process $\left\{ W_{t}\right\} _{t\in S}$
is unique, since Gaussian processes are uniquely determined by the
first two moments. 
\end{rem}

\begin{rem}
Note that \eqref{eq:d8} gives a realization of the Gaussian process
$\left\{ W_{t}\right\} _{t\in S}$, assuming that $H_{\tilde{K}}$
is separable. In fact, the existence of such a Gaussian process satisfying
\eqref{eq:d9} is guaranteed without the separability condition. As
a consequence of \prettyref{thm:B4}, we obtain a second characterization
of operator-valued p.d. kernels, in terms of direct integral decompositions:
\end{rem}

\begin{cor}
Let $K:S\times S\rightarrow B\left(H\right)$. Then the kernel $K$
is p.d. if and only if it decomposes as
\[
K\left(s,t\right)=\int_{\Omega}\left|W_{s}\left\rangle \right\langle W_{t}\right|d\mathbb{P},
\]
where $\left\{ W_{t}\right\} _{t\in S}$ is the $H$-valued Gaussian
process from above. 
\end{cor}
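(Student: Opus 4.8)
The plan is to read the claimed operator identity in the \emph{weak} (sesquilinear-form) sense and then reduce both implications to \prettyref{thm:B4}. For fixed $s,t\in S$ I would interpret $\int_{\Omega}\left|W_{s}\left\rangle \right\langle W_{t}\right|\,d\mathbb{P}$ as the operator $M_{s,t}\in B\left(H\right)$ determined by pairing the random rank-one operator $\left|W_{s}\left\rangle \right\langle W_{t}\right|:c\mapsto W_{s}\left\langle W_{t},c\right\rangle _{H}$ against $a$ and $b$ and integrating, namely
\[
\left\langle a,M_{s,t}b\right\rangle _{H}=\int_{\Omega}\left\langle a,W_{s}\right\rangle _{H}\left\langle W_{t},b\right\rangle _{H}\,d\mathbb{P},\qquad a,b\in H.
\]
The key observation is that the right-hand side is exactly $\mathbb{E}\left[\left\langle a,W_{s}\right\rangle _{H}\left\langle W_{t},b\right\rangle _{H}\right]$, which by \prettyref{thm:B4} equals $\left\langle a,K\left(s,t\right)b\right\rangle _{H}$. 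Hence the form is bounded and $M_{s,t}=K\left(s,t\right)$; this single identity is the heart of the statement.

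For the forward implication, suppose $K$ is p.d. Then the $H$-valued Gaussian process $\left\{ W_{t}\right\} _{t\in S}$ is available: in the separable case it is given explicitly by \eqref{eq:d8}, and in general its existence follows from the standard correspondence between p.d. kernels and Gaussian processes. Applying \prettyref{thm:B4} as above yields $M_{s,t}=K\left(s,t\right)$ for all $s,t\in S$, which is the asserted decomposition.

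For the converse, assume the decomposition holds. For any $\left(a_{i}\right)_{i=1}^{n}$ in $H$ and $\left(s_{i}\right)_{i=1}^{n}$ in $S$, I would expand the quadratic form using the weak interpretation and linearity of the integral over the finite sum, together with $\left\langle a_{i},W_{s_{i}}\right\rangle _{H}=\overline{\left\langle W_{s_{i}},a_{i}\right\rangle _{H}}$:
\[
\sum_{i,j}\left\langle a_{i},K\left(s_{i},s_{j}\right)a_{j}\right\rangle _{H}=\int_{\Omega}\sum_{i,j}\left\langle a_{i},W_{s_{i}}\right\rangle _{H}\left\langle W_{s_{j}},a_{j}\right\rangle _{H}\,d\mathbb{P}=\int_{\Omega}\left|\sum_{i}\left\langle W_{s_{i}},a_{i}\right\rangle _{H}\right|^{2}\,d\mathbb{P}\geq0,
\]
so $K$ is p.d.

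The main obstacle is interpretive rather than computational: in infinite dimensions the random rank-one operator $\left|W_{s}\left\rangle \right\langle W_{t}\right|$ need not be integrable in operator or trace norm, so one must commit to the weak/sesquilinear-form reading of the integral and then verify that the resulting form is bounded. Precisely that boundedness, and in fact the exact value $K\left(s,t\right)$ of the form, is supplied by \prettyref{thm:B4}; once the integral is correctly interpreted, the two implications reduce to the short computations above.
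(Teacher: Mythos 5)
Your proposal is correct and takes essentially the same route as the paper: the paper states this corollary as an immediate consequence of \prettyref{thm:B4} (without writing out details), and your weak/sesquilinear-form reading of $\int_{\Omega}\left|W_{s}\left\rangle \right\langle W_{t}\right|d\mathbb{P}$ combined with the covariance identity \eqref{eq:d9} is exactly the intended argument for the forward direction. Your converse, expanding the quadratic form into $\int_{\Omega}\bigl|\sum_{i}\left\langle W_{s_{i}},a_{i}\right\rangle _{H}\bigr|^{2}d\mathbb{P}\geq0$, is the natural integral analogue of the factorization argument in \prettyref{cor:B2} and correctly completes the equivalence.
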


\section*{Declarations}

The authors did not receive support from any organization for the
submitted work. The authors have no relevant financial or non-financial
interests to disclose.

\bibliographystyle{plain}
\nocite{*}
\bibliography{ref}

\end{document}